\newtheorem{theorem}{Theorem}[section]
\newtheorem{proposition}[theorem]{Proposition} 
\newtheorem{lemma}[theorem]{Lemma}
\theoremstyle{definition}
\newtheorem{definition}[theorem]{Definition}
\theoremstyle{remark}
\newcommand{\defemph}{\textit}
\newcommand{\m}{-}
\newcommand{\mc}{\mathcal}
\newcommand{\mbb}{\mathbb}
\newcommand{\forces}{\Vdash}
\newcommand{\baire}{{^\omega}\omega}
\newcommand{\bairenodes}{{^{<\omega}\omega}}
\title[No Unwanted Universally Baire Morphisms]
 {No Unwanted \\ Universally Baire Morphisms}
\author{Dan Hathaway}
\address{Mathematics Department\\
University of Michigan\\
Ann Arbor, MI 48109--1043, U.S.A.}
\email{danhath@umich.edu}
\begin{document}

\begin{abstract}
We show that the usual proof
 that there are no morphisms,
 whose constituent maps are Borel,
 between certain challenge-response relations
 generalizes to show that there
 are no morphisms whose constituent
 maps are universally Baire.
\end{abstract}

\maketitle

\section{Morphisms}

Let $\kappa$ be a cardinal.
Recall that a set
 $A \subseteq \baire$
 is $\kappa$-\textit{universally Baire}
(see \cite{Feng})
 iff there exist trees
 $T, S \subseteq
  {^{<\omega} \omega} \times
  {^{<\omega} \delta}$
 for some cardinal $\delta$
 such that $p[T] = A$
 and in every forcing extension of $V$
 by a forcing of size $\le \kappa$,
 $p[T] = \baire \m p[S]$.
A set $A \subseteq \baire$
 is \textit{universally Baire}
 iff it is $\kappa$-universally Baire
 for all $\kappa$.
We make a similar definition for relations
 on $\baire$ to be universally Baire.
We say that a function is universally Baire
 iff its graph is.
Given a set $A$ which is $\kappa$-universally Baire,
 witnessed by $T$ and $S$,
 and given a forcing of size $\le \kappa$,
 we say that the set $p[T]$ (as computed in the extension)
 is what $A$ \textit{lifts} to.

Let $\mc{R}_1 := \langle \baire, \baire, R_1 \rangle$
 and $\mc{R}_2 := \langle \baire, \baire, R_2 \rangle$
 be challenge-response relations
 (so $R_1, R_2 \subseteq \baire \times \baire$).
It is natural to ask if there is a \textit{morphism}
 form the first to the second.
That is, a pair $\langle \phi_-, \phi_+ \rangle$
 of functions $\phi_-, \phi_+ : \baire \to \baire$
 such that
 $$(\forall x \in \baire)(\forall y \in \baire)\,
 \phi_-(x) R_1 y \Rightarrow x R_2 \phi_+(y).$$
In \cite{Blass} (Theorem 4.15),
 a situation is given where there
 can be no such morphism
 with \textit{either} $\phi_-$ or $\phi_+$ Borel.
The goal of this document is to
 show why in the same situation
 it is impossible for
 \textit{both} $\phi_-$ and $\phi_+$
 to be universally Baire.
This leaves open the question
 of whether one of $\phi_-$ or $\phi_+$
 could be universally Baire..

Consider a challenge-response relation
 $\mc{R} = \langle \baire, \baire, R \rangle$
 which has an interpretation in every
 forcing extension (this happens when
 the relation $R$ is universally Baire for example).
Given a forcing $\mbb{P}$,
 we say that that $\mbb{P}$ is
 $\mc{R}$-\textit{adequate} if
 $$1_\mbb{P} \forces
 (\forall x \in \baire)
 (\exists y \in \baire \cap \check{V})\,
 x R y.$$
\begin{lemma}
Let $\kappa$ be an infinite cardinal.
Let $f : \baire \to \baire$ be a function
 whose graph is $\kappa$-universally Baire.
Then in every forcing extension
 by a poset of size $\le \kappa$,
 $f$ lifts to a function
 defined on all of $\baire$.
\end{lemma}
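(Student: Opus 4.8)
The plan is to fix a poset $\mbb{P}$ with $|\mbb{P}| \le \kappa$ and a $\mbb{P}$-generic $g$ over $V$, and to show that the lift $G^{*}$ of the graph $G$ of $f$ is, in $V[g]$, the graph of a function defined on all of $\baire$. Fix trees $T, S \subseteq {}^{<\omega}\omega \times {}^{<\omega}\omega \times {}^{<\omega}\delta$ witnessing that $G \subseteq \baire \times \baire$ is $\kappa$-universally Baire (with the first two coordinates for the two $\baire$-components). Composing the third coordinate of $T$ and $S$ with a fixed injection of $\delta$ into a cardinal $> \kappa$ yields trees with the same projections as $T$ and $S$ in every extension of $V$; replacing $T, S$ by these, we may assume $\delta$ is itself a cardinal $> \kappa$. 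Since $\mbb{P}$ has the $\kappa^{+}$-chain condition, $\delta^{+}$ is then preserved, so $(\delta^{+})^{V[g]} = \delta^{+}$. Writing $G^{*} := p[T]$ as computed in $V[g]$, I must check two statements there: \emph{functionality}, that there is no $(x,y_{0},y_{1})$ with $y_{0} \ne y_{1}$ and $(x,y_{0}),(x,y_{1}) \in G^{*}$; and \emph{totality}, that every $x$ has some $y$ with $(x,y) \in G^{*}$. For each I will build in $V$ a tree $U$ whose illfoundedness (in a given model $M \supseteq V$) is equivalent to the \emph{failure} of that statement in $M$; since the statement holds in $V$, $U$ is wellfounded in $V$; since wellfoundedness is absolute (a rank function in $V$ remains a rank function in $V[g]$), $U$ is wellfounded in $V[g]$, and so the statement holds there.

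For functionality, I would build from $T$, inside $V$, a tree $T_{D}$ whose branches code tuples $(x,y_{0},z_{0},y_{1},z_{1},n)$ with $(x,y_{0},z_{0})$ and $(x,y_{1},z_{1})$ branches of $T$ and $y_{0}(n) \ne y_{1}(n)$. The definition of $T_{D}$ from $T$ is local (quantifier-free), so $p[T_{D}]^{M} = \{(x,y_{0},y_{1}) \in M : (x,y_{0}),(x,y_{1}) \in p[T]^{M},\ y_{0} \ne y_{1}\}$ for every $M \supseteq V$. As $f$ is a function, $p[T_{D}] = \emptyset$ in $V$, i.e.\ $T_{D}$ is wellfounded in $V$ and hence in $V[g]$; so $p[T_{D}]^{V[g]} = \emptyset$, which is functionality of $G^{*}$.

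For totality, for $x \in \baire$ let $T_{x} := \{(t,w) : |t|=|w|,\ (x{\restriction}|w|, t, w) \in T\}$, a tree on $\omega \times \delta$; then $x$ is in the domain of $p[T]$ iff $T_{x}$ is illfounded. Each $T_{x}$ has at most $\delta$ nodes, so if it is wellfounded it carries a rank function with values $< \delta^{+}$, a bound that by our choice of $\delta$ is computed the same way in $V$ and in $V[g]$. Revealing such a rank function finitely many values at a time as $x$ is revealed (say, committing at stage $n$ to its values on the finitely many nodes of $T_{x}$ whose entries and length are $< n$), I would build in $V$ a tree $W$ on $\omega \times {}^{<\omega}(\delta^{+})$ with $p[W]^{M} = \{x \in \baire^{M} : T_{x}\ \text{is wellfounded in}\ M\}$ for $M \in \{V, V[g]\}$. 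Since $f$ is total, no $x \in \baire^{V}$ has $T_{x}$ wellfounded, so $W$ has no branch in $V$; hence $W$ is wellfounded in $V$, hence in $V[g]$; hence no $x \in \baire^{V[g]}$ has $T_{x}$ wellfounded, i.e.\ every $T_{x}$ is illfounded, i.e.\ $G^{*}$ is total. With functionality, this shows $G^{*}$ is the graph of a function defined on all of $\baire$.

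The delicate step is the construction of $W$ and the verification that $p[W]$ really computes $\{x : T_{x}\ \text{wellfounded}\}$ after forcing — this is where one needs that a wellfounded tree on $\omega \times \delta$ has rank $< \delta^{+}$ together with the preservation of $\delta^{+}$, which is the sole reason for padding $\delta$ above $\kappa$. The remaining points — that the operations defining $T_{D}$ commute with forming $p[\cdot]$, and the absoluteness of wellfoundedness — are routine. It is worth noting that $S$ is in fact never used: only the $\kappa$-Suslin representation $T$ of $G$, and the resulting co-Suslin representation $W$ of the complement of the domain, enter the proof.
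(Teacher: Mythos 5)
Your architecture is the same as the paper's: reduce totality of the lift to the wellfoundedness of a tree $W\in V$ of approximations to rank functions for the sections $T_x$, and invoke absoluteness of wellfoundedness; moreover your padding of $\delta$ above $\kappa$ so that $\delta^+$ is preserved is a genuine improvement in precision (the paper asserts a rank function into $\omega_1$, which is only right for countable $\delta$), and your tree $T_D$ correctly supplies the functionality argument that the paper leaves as ``similar reasoning.''

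The gap is in the one step you yourself flag: the recipe for $W$. You commit at stage $n$ to the values of the putative rank function on ``the finitely many nodes of $T_x$ whose entries and length are $<n$.'' When $\delta>\omega$, the union over $n$ of these finite sets is only the fragment of $T_x$ whose ${}^{<\omega}\delta$-coordinate has all entries $<\omega$; a branch of $W$ above $x$ therefore assembles into an order-reversing map on that countable fragment only, and this does not imply that $T_x$ is wellfounded (for instance, $T_x$ could carry an infinite branch all of whose $\delta$-entries are $\ge\omega$ while its finite-entry fragment is trivial). Consequently the inclusion $p[W]\subseteq\{x: T_x\ \mathrm{wellfounded}\}$ --- which is exactly what you need \emph{in $V$} to pass from totality of $f$ to $p[W]^V=\emptyset$ and hence to wellfoundedness of $W$ in $V$ --- is not established. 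The repair is standard and small: drop the insistence on finite commitments and let the stage-$n$ node of $W$ above $x\restriction n$ record the values of the rank function on \emph{all} nodes of $T_{x\restriction n}$ of length $\le n$ (a set of size $\le\delta$). Then a branch of $W$ above $x$ yields a genuine rank function on all of $T_x$, both inclusions of $p[W]^M=\{x: T_x\ \mathrm{wellfounded\ in\ }M\}$ hold, and nothing else changes: $W$ is still a set-sized tree (its levels now have width up to $(\delta^+)^{\delta}$ rather than $\delta^+$), and the rank-function argument for absoluteness of wellfoundedness is indifferent to the width of the levels. Your preservation of $\delta^+$ remains exactly what is needed for the inclusion used in $V[g]$.
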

\begin{proof}
Let $\kappa$ be a cardinal
 and let $T, S$ be trees witnessing
 that the graph of $f$ is
 $\kappa$-universally Baire.
Let $\mbb{P}$ be a poset of size
 $\le \kappa$.
Let $G$ be $(V,\mbb{P})$-generic.
We want to show that $(p[T])^{V[G]}$
 is the graph of a total function
 in $V[G]$.
Given any
 $x \in (\baire)^{V[G]}$,
 we want some $y \in (\baire)^{V[G]}$
 satisfying $(x,y) \in p[T]$.

Towards a contradiction,
 fix an $x \in (\baire)^{V[G]}$
 such that there is no
 such corresponding $y$.
From $T \subseteq {^{<\omega}\omega
 \times {^{<\omega}\omega}
 \times {^{<\omega}\delta}}$ we can form
 $T_x \subseteq { ^{<\omega}\omega
 \times {^{<\omega}\delta}}$
 which is a well-founded tree.
Since $T_x$ is well-founded,
 it has some rank function
 $\sigma : T_x \to \omega_1$.
Consider the tree
 $W$ whose nodes are pairs
 consisting of an element of
 $\bairenodes$
 and a partial attempt to build
 a rank function for the corresponding
 tree from $T$.
We have that
 $(x,\sigma)$ is a path through $W$.
Since $W \in V$ and $W$ has
 a path in $V[G]$,
 it has a path in $V$.
Such a path witnesses that
 $f$ is not total in $V$,
 which is a contradiction.

Using similar reasoning,
 it can be shown that in $V[G]$
 there are not $x,y_1,y_2$ with
 $y_1 \not\in y_2$ such that
 $(x,y_1) \in p[T]$ and
 $(x,y_2) \in p[T]$.
\end{proof}
We will show the following.
The hypothesis arises in practice,
 for example the proof that there
 is no Borel morphism from the
 splitting relation to the domination
 relation
 (see \cite{Blass} Theorem 4.15).
\begin{proposition}
\label{main_prop}
Let $\mc{R}_1 = \langle \baire, \baire, R_1 \rangle$
 and $\mc{R}_2 = \langle \baire, \baire, R_2 \rangle$
 be challenge-response relations with
 $R_1$ and $R_2$ universally Baire.
Suppose there is a forcing $\mbb{P}$
 which is $\mc{R}_1$-adequate
 but not $\mc{R}_2$-adequate.
Then there is no morphism
 $\langle \phi_-, \phi_+ \rangle$
 from $\mc{R}_1$ to $\mc{R}_2$
 such that both the graph of
 $\phi_-$ and the graph of $\phi_+$
 are universally Baire.
\end{proposition}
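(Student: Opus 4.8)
The plan is to assume that $\langle \phi_-, \phi_+ \rangle$ is a morphism from $\mc{R}_1$ to $\mc{R}_2$ whose two graphs are universally Baire and to derive a contradiction by passing to an extension by $\mbb{P}$. Fix an infinite cardinal $\kappa \ge |\mbb{P}|$ and fix trees witnessing that each of the four sets $R_1$, $R_2$, $\Gamma_- := \mathrm{graph}(\phi_-)$, $\Gamma_+ := \mathrm{graph}(\phi_+)$ is $\kappa$-universally Baire; for such a set $A$ write $T_A$ for the ``positive'' tree (so $p[T_A] = A$) and $S_A$ for the ``negative'' one. Since $\mbb{P}$ is not $\mc{R}_2$-adequate, there is a condition $p \in \mbb{P}$ forcing that some $x \in \baire$ has $\neg(x R_2 y)$ for every $y \in \baire \cap \check{V}$. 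Let $G$ be $(V,\mbb{P})$-generic with $p \in G$, and let $\bar\phi_-$, $\bar\phi_+$, $\bar R_1$, $\bar R_2$ denote the lifts of $\phi_-$, $\phi_+$, $R_1$, $R_2$ to $V[G]$; by the Lemma, $\bar\phi_-$ and $\bar\phi_+$ are total functions $\baire \to \baire$ in $V[G]$.

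The central step is to transport the morphism property to $V[G]$: I want that in $V[G]$, $\bar\phi_-(x) \mathrel{\bar R_1} y$ implies $x \mathrel{\bar R_2} \bar\phi_+(y)$ for all $x,y \in \baire$. To get this I would define, in $V$, a tree $U$ whose nodes code finite approximations to reals $x,y,z,w \in \baire$ together with branches witnessing $(x,z) \in p[T_{\Gamma_-}]$, $(z,y) \in p[T_{R_1}]$, $(y,w) \in p[T_{\Gamma_+}]$, and $(x,w) \in p[S_{R_2}]$. In $V$, because $\phi_-$ and $\phi_+$ are total functions, a branch through $U$ is the same thing as a pair $(x,y)$ with $\phi_-(x) R_1 y$ and $\neg(x R_2 \phi_+(y))$; so the morphism property makes $p[U]$ empty, hence $U$ is well-founded in $V$, hence --- well-foundedness being absolute --- well-founded in $V[G]$, so $p[U]$ is empty in $V[G]$ too. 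Now in $V[G]$ the branches of $U$ correspond to quadruples $x,y,z,w$ with $(x,z) \in \mathrm{graph}(\bar\phi_-)$, $z \mathrel{\bar R_1} y$, $(y,w) \in \mathrm{graph}(\bar\phi_+)$, and $(x,w) \in (p[S_{R_2}])^{V[G]}$; since $|\mbb{P}| \le \kappa$ we have $(p[S_{R_2}])^{V[G]} = (\baire \times \baire) \setminus \bar R_2$, so the emptiness of $p[U]$ in $V[G]$ says exactly that no $x,y$ violate $\bar\phi_-(x) \mathrel{\bar R_1} y \Rightarrow x \mathrel{\bar R_2} \bar\phi_+(y)$.

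With this in hand the argument finishes along the usual lines inside $V[G]$. By the choice of $p$ there is $x^* \in \baire$ with $\neg(x^* \mathrel{\bar R_2} y)$ for all $y \in \baire \cap V$. Since $\mbb{P}$ is $\mc{R}_1$-adequate, applied to the real $\bar\phi_-(x^*)$ there is $y^* \in \baire \cap V$ with $\bar\phi_-(x^*) \mathrel{\bar R_1} y^*$, and then the transported morphism property gives $x^* \mathrel{\bar R_2} \bar\phi_+(y^*)$. Finally, $y^* \in V$ and the pair $(y^*, \phi_+(y^*))$ already lies in $p[T_{\Gamma_+}]$ in $V$, so by upward absoluteness of membership in $p[T_{\Gamma_+}]$ it lies there in $V[G]$, i.e. $\bar\phi_+(y^*) = \phi_+(y^*) \in \baire \cap V$; this contradicts the defining property of $x^*$.

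The step I expect to be the real work is the transport of the morphism property, and in particular arranging the search tree $U$ so that its branches match genuine counterexamples on both sides: this uses the Lemma (to know the lifted maps are still total functions, so that ``$\phi_-(x) R_1 y$'' can be recovered from the graphs) and the size restriction $|\mbb{P}| \le \kappa$ (so that in $V[G]$ the negative tree $S_{R_2}$ still projects onto the complement of $\bar R_2$). Everything else is routine: the two absoluteness facts used are that a tree in $V$ which is well-founded in $V$ stays well-founded in $V[G]$, and that membership in $p[T]$ is upward absolute, and the final contradiction is just unwinding $\mc{R}_1$-adequacy against the failure of $\mc{R}_2$-adequacy.
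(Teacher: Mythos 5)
Your proposal is correct and is essentially the paper's argument run in contrapositive form: the tree $U$ you build amalgamating $T_{\Gamma_-}, T_{R_1}, T_{\Gamma_+}, S_{R_2}$ is exactly the tree the paper invokes when it says the counterexample statement $\Phi$ ``asserts the existence of a path through a tree in the ground model,'' and your use of the Lemma, of $\mc{R}_1$-adequacy versus the failure of $\mc{R}_2$-adequacy, and of upward absoluteness to see $\bar\phi_+(y^*) \in V$ all match the paper (the last point being one the paper uses only implicitly). The only difference is presentational: the paper exhibits $\Phi$ in $V[G]$ and pulls the branch down to $V$, while you push well-foundedness of $U$ up from $V$ to $V[G]$ and contradict there.
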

\begin{proof}
Consider a pair of functions
 $\langle \phi_-, \phi_+ \rangle$
 that are universally Baire.
Let $\Phi$ be the statement that
 there exist $x_1, x_2, y_1, y_2 \in \baire$
 satisfying the following
\begin{itemize}
\item[1)] $\phi_-(x_1) = x_2$;
\item[2)] $\phi_+(y_1) = y_2$;
\item[3)] $x_2 R_1 y_1$;
\item[4)] $\neg x_1 R_2 y_2$.
\end{itemize}
$$
\xymatrix{
 x_2 & R_1 & y_1 \ar[d]^{\phi_+} \\
 x_1 \ar[u]^{\phi_-} & \neg R_2 & y_2.
 }
$$
We claim that $\Phi$ is equivalent
 to a statement which asserts the existence
 of a path through a tree
 in the ground model.
The conditions 1)-4) are all of this form.
For example,
 if $R_2 = p[T] = \baire \m p[S]$,
 then 4) is equivalent to saying that
 $(x_1, y_2)$ is a path through $S$.

Note that by the definition of a morphism,
 if $\Phi$ holds, then
 $\langle \phi_-, \phi_+ \rangle$
 is not a morphism from $\mc{R}_1$ to $\mc{R}_2$.
Now since $\Phi$ is equivalent to
 a statement which asserts the existence of a
 path through a tree in the ground model,
 it is absolute between $V$ and forcing extensions.
In particular, if we show that $\Phi$ holds
 after forcing with $\mbb{P}$, then we are done.

Force with $\mbb{P}$ to get $V[G]$.
Let $x_1 \in (\baire)^{V[G]}$ witness that
 $\mbb{P}$ is not $\mc{R}_2$-adequate.
That is, there is no $y \in \baire \cap V$
 satisfying $x_1 R_2 y$.
By the lemma above,
 we may speak of $\phi_-(x_1)$.
Since $\mbb{P}$ is $\mc{R}_1$-adequate,
 let $y_1 \in \baire \cap V$ satisfy
 $\phi_-(x_1) R_1 y_1$.
By what we said about $x_1$, we have
 $\neg x_1 R_2 \phi_+(y_1)$.
Hence, $\Phi$ is satisfied.
This completes the proof.
\end{proof}

Note that this
 proposition says that
 $\phi_-$ and $\phi_+$ cannot
 \textit{both} be universally Baire,
 whereas Theorem 4.15 of \cite{Blass}
 says that \textit{neither}
 $\phi_-$ nor $\phi_+$ can be Borel.

\section{Weak Morphisms}

There is a variant of the notion of morphism
 which is more general when one does not
 assume the Axiom of Choice.
The idea is to replace functions
 with multiple valued functions.

\begin{definition}
Given challenge-response relations
 $\mc{A} = \langle \baire, \baire, A \rangle$ and
 $\mc{B} = \langle \baire, \baire, B \rangle$,
 a \defemph{weak morphism}
 from $\mc{A}$ to $\mc{B}$ is a pair
 $\langle F_-, F_+ \rangle$ of relations
 $F_-, F_+ \subseteq \baire \times \baire$
 such that the following are satisfied:
\begin{itemize}
\item[1)] $(\forall b_- \in \baire)(\exists a_- \in \baire)\,
 b_- F_- a_-$;
\item[2)] $(\forall a_+ \in \baire)(\exists b_+ \in \baire)\,
 a_+ F_+ b_+$;
\item[3)] for all $a_-, a_+, b_-, b_+ \in \baire$,
 if $b_- F_- a_-$, $a_+ F_+ b_+$, and $a_- A a_+$,
 then $b_- B b_+$.
\end{itemize}
\end{definition}
This definition can be remembered by the following picture.
$$\xymatrix{
 a_- & A \ar@{=>}[dd] & a_+ \\
 F_-    &   & F_+ \\
 b_- & B & b_+.
}$$
If there is a morphism from
 $\mc{A}$ to $\mc{B}$,
 then there is a weak morphism from
 $\mc{A}$ to $\mc{B}$.
Assuming the Axiom of Choice,
 the other direction holds as well.
The proof of Proposition~\ref{main_prop}
 can be easily modified to prove
 the corresponding result
 for weak morphisms.

\end{document}